\date{\today}
\title{Cohomology of flat currents on definable pseudomanifolds}
\author{Saurabh Trivedi}
\address{ICMC-USP, S\~ao Carlos, SP, Brazil}
\email{saurabh.trivedi@gmail.com}
\begin{document}
\newtheorem{thm}{Theorem}[section]
\newtheorem{lem}[thm]{Lemma}

\theoremstyle{theorem}
\newtheorem{prop}[thm]{Proposition}
\newtheorem{defn}[thm]{Definition}
\newtheorem{cor}[thm]{Corollary}
\newtheorem{example}[thm]{Example}
\newtheorem{xca}[thm]{Exercise}

\newcommand{\bb}{\mathbb}
\newcommand{\al}{\mathcal}
\newcommand{\ak}{\mathfrak}
\newcommand{\fs}{\mathscr}

\theoremstyle{remark}
\newtheorem{rem}[thm]{Remark}
\parskip .12cm

\begin{abstract} We show that the cohomology of flat currents on definable pseudomanifolds in polynomially bounded o-minimal structures is isomorphic to its intersection cohomology in the top perversity. 
\end{abstract}

\maketitle

It is known that the intersection cohomology of pseudomanifolds is related to their $L^p$-cohomology. Cheeger \cite{Cheeger} showed that the $L^2$-cohomology of pseudomanifolds with metrically conical singularities is isomorphic to their intersection homology in the middle perversity. Youssin \cite{Youssin} proved that the $L^p$-cohomology ($p \neq 2$) of spaces with conical horns is isomorphic to the intersection homology in what is called the $L^p$-perversity. Saper \cite{Saper} proved a similar result for cohomology of spaces with isolated singularities with a distinguished K\"ahler metric. And, Hsiang and Pati \cite{Pati} showed that the $L^2$-cohomology of normal algebraic complex surfaces is dual to their intersection homology. 

In all the above mentioned articles the results are proved for spaces either in low dimensions or with assumptions on their singularities. In this article we consider flat currents on pseudomanifolds in polynomially bounded o-minimal structures and show that the cohomology of flat currents on such sets is isomorphic to their intersection cohomology in the top perversity. We do not make any assumptions on the dimensions or the metric type of singularities of the pseudomanifolds under consideration. 

In section 1, we give definitions of o-minimal structures, polynomially bounded o-minimal structures, definable pseudomanifolds and currents on them.

In section 2, we define flat currents on definable pseudomanifolds. A classical result about the structure of flat currents is then mentioned. Flat currents have a well defined boundary operator and this allows us to define a cohomology theory for pseudomanifolds, which we call the flat cohomology.

Our aim is to prove a Poincar\'e lemma for flat currents. For this we define the notion of weak flat currents in section 3. A result relating the flat cohomology and cohomology of weak flat currents, that they are isomorphic, is then mentioned. 

In section 4, we prove a local Poincar\'e lemma for flat currents on definable pseudomanifolds in polynomially bounded o-minimal structures. A key result used to prove the lemma is the local Lipschitz retractibility of definable sets in polynomially bounded o-minimal structures.

In section 5, we recall the definition of normal pseudomanifolds and existence of normalizations of pseudomanifolds. We then state a result due to Goresky and Macpherson which says that the intersection cohomology in top perversity is isomorphic to the singular cohomology of the normalization of a pseudomanifold. A definable version of this results is then proved.

Finally, in section 6, a global version of Poincar\'e lemma is proved. This gives us the desired de Rham theorem, relating the flat cohomology and the intersection cohomology in top perversity of definable pseudomanifolds.

\section{Definitions}

\subsection{o-minimal structures}
 A family $\al D = \{\al D_n\}_{n\in \bb N}$, where $\al D_n$ is a collection of subsets (called definable sets of $\al D$) of $\bb R^n$, is called an o-minimal structure on $\bb R$ if:
 
 1. $\al D_n$ is closed under union, intersection and complements.\\
 \indent
 2. if $A \in \al D_n$ then $\bb R \times A$ and $A \times \bb R$ are in $\al D_{n+1}$.\\
 \indent
 3. $\al D_n$ contains the zero set of all polynomials in $n$-variables.\\
 \indent
 4. if $A \in \al D_n$ then its projection onto $\bb R^{n-1}$ is in $\al D_{n-1}$.\\
 \indent
 5. The members of $\al D_1$ are finite unions of points and intervals.
 
 A map $f : A \to B$ between two definable sets is called a definable map if its graph is definable.
 
 The family of semialgebraic sets, globally subanalytic sets and extension of globally subanalytic sets by log and exponential sets (called the log-exp structure) are examples of o-minimal structures.
 
 An o-minimal structure is said to be polynomially bounded if for each definable function $f : \bb R \to \bb R$, there exists an $a>0$ an integer $r > 0$ such that $f(x) \leq x^r$ for all $x>a$.
 
 Semialgebraic sets and globally subanalytic sets are examples of polynomially bounded o-minimal structures on $\bb R$. Log-exp structure is an example of a non-polynomially bounded o-minimal structure.
 
 Definable sets in o-minimal structures can be decomposed into cells; see theorem 2.11 in \cite{Dries1}. By the dimension of a definable set we mean the maximum of the dimensions of cells in its cell decomposition. We must clarify here that there exist o-minimal structures whose definable sets do not admit a smooth cell decomposition. Examples can be found in le Gal and Rolin \cite{leGal}. However, every o-minimal structure admits a $C^r$-cell decomposition ($r$ depending on the structure). The set of semialgebraic sets, or subanalytic sets admit smooth cell decomposition.  
 
 In this article we work only with \emph{polynomially bounded o-minimal structures that admit smooth cell decomposition}. The reason for this is two fold: first is that we implicitly use a preparation theorem for definable maps in polynomially bounded o-minimal structures\footnote{There is no preparation theorem for o-minimal structures in general.} and second that we consider smooth differential forms on definable manifolds whose existence depends on the degree of smoothness of the cell decomposition. For a precise statement of the preparation theorem for definable maps in polynomially bounded structures we refer to Nguyen and Valette \cite{ValetteNguyen}.
 
 \subsection{Definable pseudomanifolds} Let $\al D = \{\al D_n\}_{n\in \bb N}$ be an o-minimal structure on $\bb R$. Let $X \subset \bb R^n$ be a definable set in $\al D$.  We denote by $M \subset X$ the set of points of $X$ at which $X$ is locally a smooth submanifold of $\bb R^n$, it is called the regular locus of $X$. The singular locus of $X$ is $X - M$. Both $M$ and $X - M$ are again definable sets in $\al D$. In general $M$ can have multiple components with each component being a manifold of different dimension. Thus, we assume in addition that $M$ is dense in $X$ and in that case it will be a $C^r$-submanifold of $\bb R^n$ of dimension $l$.
 
 A definable set $X$ in $\al D$ of dimension $l$ is said to be a pseudomanifold if the dimension of its singular locus $X - M$ is at most $l-2$. Typical examples of pseudomanifolds are pinched tori, suspension of tori and the wedge of spheres. In fact, any complex projective variety is a definable pseudomanifold in the o-minimal structure of globally subanalytic sets. 
 
 \subsection{Currents} Let $M$ be a smooth manifold of dimension $l$. Denote by $\Omega^q_c(M)$ the set of differential $q$-forms on $M$ with compact support. Put the topology on $\Omega^q_c(M)$ characterized by the assertion that a sequence $\{\omega_i\}$ in $\Omega^q_c(M)$ converges to a compactly supported $q$-form $\omega$, if there exists a compact set $K \subset M$ such that all $\omega_i$'s have support in $K$ and the derivatives of the coefficients of $\omega_i$'s converge uniformly to the derivatives of coefficients of $\omega$. Let us use $\Omega^q_c(M)$ for the set of compactly supported differential $q$-forms with this topology. 
 
 A $p$-current on $M$ is a continuous linear functional on $\Omega_c^{l-p}(M)$. Notice that we define $p$-currents to be continuous linear functional on differential $(l-p)$-forms. Just why is it so will be clear later. Notice also that $l$-currents on $M$ are nothing but distributions on $M$. 
 
 We denote by $\fs C^p(M)$ the set of all $p$-currents on $M$. There is a canonical operator on currents called the boundary operator. It is the linear map $\partial:\fs C^{p}(M) \rightarrow \fs C^{p+1}(M)$ defined by $(\partial T)(\omega) = T(d\omega)$, where $d$ is the exterior derivative on forms. Notice that $\partial \partial T =0$ for any current $T$. Just like in the case of forms, a $p$-current $T$ is said to be closed if $\partial T = 0$, that is $\partial T(\omega) = 0$ for all $\omega$ and $T$ is said to be exact if there exists a $(p-1)$-current $S$ such that $\partial S = T$. 
 
 \section{Flat currents on Definable Pseudomanifolds} 
 
 Let $X \subset \bb R^n$ be a definable pseudomanifold of dimension $l$ with regular locus $M$ in an o-minimal structure $\al D$ on $\bb R$. Let $\Omega^q_c(X)$ be the set of differential $q$-forms on $M$ ($M$ is a smooth submanifold of $\bb R^n$) with compact support in $\bb R^n$. Since $X$ is dense in $M$ the closure of $M$ is $X$ thus it is same as saying that the support lies in $X$ and is compact. 
 
 Denote by $\fs C^p(X)$ the set of all $p$-currents on $X$; that is continuous linear functionals on $\Omega^{l-p}_c(X)$ (with the topology as defined before). 
 
 \subsection{Mass and Flat norm}
 For a current $T \in \fs C^p(X)$, define its mass by:
 $$M(T) = \sup\{T(\omega) : \omega \in \Omega^{l-p}(X), |\omega|<1 \}$$
 where
 $$|\omega| = \sup\{|\langle \omega,\xi\rangle| : \xi\,\, \text{is unit, simple, $(l-p)$-vector}\}.$$ Notice that the mass is a seminorm.
  
 We define another seminorm on currents called the flat norm due to Whitney. The flat norm of $T$ is defined to be:
 $$\ak F(T) = \inf \{M(T-\partial A) + M(A) : A \in \fs C^{p-1}(X)\}.$$
 
 \subsection{Normal and Flat currents}
 
 A $p$-current $T$ is said to be normal if $M(T) + M(\partial T)$ is finite.
 
 The elements of closure of normal currents under the flat norm are called flat currents. We denote by $\al F^p(X)$ The set of flat $p$-currents on $X$.
 
 We have a result of Federer classifying the flat currents as follows:
 
 \begin{thm}\label{thm21}
 A $p$-current $T$ is flat if and only if there exist a flat $p$-current $R$ and a flat $(p-1)$-current $S$ both of finite mass such that $T = R + \partial S$.
 \end{thm}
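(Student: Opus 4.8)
The plan is to prove the two implications separately, with the backward direction being essentially immediate and the forward direction carrying the real content. For the backward direction, suppose $T = R + \partial S$ where $R \in \al F^p(X)$ and $S \in \al F^{p-1}(X)$ both have finite mass. Since the flat norm $\ak F$ is dominated by the mass, and since each of $R$ and $\partial S$ is already flat (the flat currents form a $\partial$-closed flat-complete space, so $\partial S$ is flat whenever $S$ is), their sum $T$ is a sum of flat currents and hence flat. The only subtlety to record is that $\partial$ maps flat currents to flat currents and is bounded appropriately for the flat norm, which follows from the definition of $\ak F$ together with $M(\partial A) \le M(\partial A)$ and the flat-norm estimate $\ak F(\partial S) \le \ak F(S)$; I would state this as a short preliminary observation.

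For the forward direction, I would start from the definition: $T$ is flat, so there is a sequence of normal currents $T_i$ with $\ak F(T - T_i) \to 0$, and in particular $\{T_i\}$ is Cauchy in the flat norm. The idea is to unwind the flat norm. For each pair we can, by definition of $\ak F$, choose comparison currents $A_{ij}$ realizing (up to $\varepsilon$) the infimum defining $\ak F(T_i - T_j)$, so that $M(T_i - T_j - \partial A_{ij})$ and $M(A_{ij})$ are both controlled by $\ak F(T_i - T_j) + \varepsilon$. The first step is therefore to extract, by a telescoping/summability argument along a rapidly converging subsequence, two sequences $R_i$ and $S_i$ of finite-mass currents with $T_i = R_i + \partial S_i$ in such a way that both $\{R_i\}$ and $\{S_i\}$ are Cauchy in \emph{mass}. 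One arranges $\ak F(T_{i+1} - T_i) < 2^{-i}$ and absorbs the difference at each stage into an increment of $R$ and $\partial S$, so that the mass increments are summable.

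The second step is to pass to limits. Because $\{R_i\}$ and $\{S_i\}$ are mass-Cauchy, and the space of finite-mass currents is complete in the mass norm (mass is the dual norm coming from the sup-norm on forms, so bounded mass-Cauchy sequences converge to finite-mass currents by a standard Banach-space / weak-* compactness argument), they converge in mass to finite-mass currents $R$ and $S$. Mass convergence dominates flat convergence, so $R_i \to R$ and $\partial S_i \to \partial S$ in flat norm, whence $T_i = R_i + \partial S_i \to R + \partial S$ in flat norm. Since also $T_i \to T$ in flat norm and $\ak F$ is a norm on the (Hausdorff) quotient, we conclude $T = R + \partial S$. Finally I would check that $R$ and $S$, being mass limits of flat currents of finite mass, are themselves flat of finite mass, which closes the argument.

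The main obstacle I anticipate is the bookkeeping in the first step: one must choose the $A_{ij}$ and reorganize them into genuinely mass-Cauchy sequences $R_i, S_i$ while keeping the decomposition $T_i = R_i + \partial S_i$ exact at every stage, and this requires the telescoping to be set up so that the errors $M(T_i - T_j - \partial A_{ij})$ feed into the mass of the $R$-increments and $M(A_{ij})$ into the mass of the $S$-increments without the two interfering. A secondary technical point is justifying mass-completeness of finite-mass currents on the possibly non-compact regular locus $M$ of $X$; here I would either invoke the Federer structure theory directly or localize using the compact-support condition built into $\Omega^{l-p}_c(X)$, exploiting that $X$ is compact in the relevant applications so that a global mass bound yields weak-* convergence with lower-semicontinuous limiting mass.
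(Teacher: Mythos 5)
First, a point of comparison: the paper does not prove this statement at all --- it is quoted as Federer's structure theorem for flat chains --- so your proposal has to stand on its own rather than be measured against an argument in the text. Your backward direction is essentially correct: $\ak F(\partial S)\le \ak F(S)$ (given $A$ nearly realizing $\ak F(S)$, take $C=S-\partial A$ as the comparison current for $\partial S$), the boundary of a normal current is normal, hence $\partial$ preserves flatness, and the flat currents form a linear subspace since they are the closure of a linear subspace under a seminorm.

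The forward direction has the right telescoping skeleton but one genuine gap. After arranging $\ak F(T_{i+1}-T_i)<2^{-i}$ and choosing $A_i$ with $M(T_{i+1}-T_i-\partial A_i)+M(A_i)<2^{-i}$, your increments $B_i=T_{i+1}-T_i-\partial A_i$ and $A_i$ are only currents of finite mass: the infimum defining $\ak F$ ranges over \emph{all} of $\fs C^{p-1}(X)$, and a finite-mass current need not be flat (a Dirac-type current of positive dimension concentrated at a point has finite mass but is not a flat limit of normal currents). Consequently your mass-limits $R=T_1+\sum_i B_i$ and $S=\sum_i A_i$ are only shown to have finite mass, whereas the theorem asserts they are \emph{flat} of finite mass --- and this is not cosmetic, since both the converse implication and the paper's use of the theorem (to conclude that $\partial$ preserves flatness) depend on it. The missing ingredient is Federer's preliminary lemma that for a \emph{normal} current the flat-norm infimum can be computed, up to $\varepsilon$, over \emph{normal} comparison currents; with $A_i$ normal, each $B_i$ and each $A_i$ is flat with finite mass, and flatness then does pass to the mass-limits because $\ak F\le M$ and the flat currents are $\ak F$-closed by definition. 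Two smaller points: you only need the consecutive comparison currents $A_i=A_{i,i+1}$, not the double-indexed family; and the final identification of $T$ with $R+\partial S$ from $\ak F(T-R-\partial S)=0$ requires the (easy but not automatic) fact that $\ak F$ separates points, which follows from $|U(\omega)|\le \max(|\omega|,|d\omega|)\,\ak F(U)$ --- merely ``passing to the Hausdorff quotient'' would only yield the identity modulo the kernel of $\ak F$.
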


It follows immediately from the above theorem that the boundary of a flat current is again flat. 
 
 \subsection{Flat cohomology} Since the boundary of a flat current is also a flat current. It follows that the flat currents along with the boundary operator $\partial$ form a co-chain complex. The cohomology of the co-chain complex of flat currents is called the flat cohomology. We denote it by $\al FH^*(X)$. And, this is why we defined $p$-currents as the linear functional on $(l-p)$-forms. 
 
 \section{Weak Flat currents}
 
 In this section we define the notion of weak flat currents on definable pseudomanifolds and the cohomology induced by them. This will be useful in proving a local Poincar\'e lemma for flat currents. For this we need first the notion of weak differential forms. The definition is lifted from Valette \cite{Valette1}. Although the definition can be given in a more general setting, we restrict ourselves to currents on definable pseudomanifolds.
 
Let $X$ be a definable pseudomanifold of dimension $l$ with regular part $M$. A continuous differential $p$-form $\alpha$ on $M$ with compact support in $X$ is said to be weakly differentiable if there exist a continuous differential $(p+1)$-form $\omega$ on $M$ with compact support in $X$ such that for every $(l-(p+1))$-form $\theta \in \Omega^{l-(p+1)}_c(X)$ we have:
$$\int_M \alpha \wedge d \theta = (-1)^{(p+1)}\int_M \omega \wedge \theta.$$
And, we define the weak derivative of $\alpha$ as $\overline{d}\alpha = \omega$.

Denote by $\overline{\Omega^p_c(X)}$ the set of all weakly differentiable forms on $M$ with compact support in $X$. Since every smooth form is also weakly differentiable, we have $\Omega^p_c(X) \subset \overline{\Omega^p_c(X)}$. In fact, with respect to the $L^{\infty}$-norm on forms defined in 2.1 this inclusion is dense. This can be seen as follows: Let $\alpha$ be a weakly differential form with weak derivative $\omega$ and $\eta_{\epsilon}$ be the standard bump function on $X$, then the convolution $\{\omega \ast \eta_{\epsilon}\}$, which a smooth approximation of $\alpha$ as $\epsilon$ tends to $0$.

Now, if $T$ is a flat current on $X$ and $\{\omega_n\}$ is a sequence of smooth forms on $M$ with compact support in $X$ with finite $L^\infty$-norm that converges to a weakly differentiable form $\tilde{\omega}$, we can define a linear functional $\overline T$ on the set of weakly differentiable forms by setting $$\overline{T}(\tilde{\omega}) = \lim_{n \to \infty} T(\omega_n).$$ We call this linear operator a weak flat current. One can analogously talk about weak normal currents, weak mass norm and weak flat norm etc. This will be used later.

Denote by $\overline{\al F^p(X)}$ the set of all weak flat currents. A boundary operator $\overline \partial$ on weak flat currents can be defined by setting 
$$\overline{\partial}\,\overline{T}(\tilde{\omega}) = \overline{T}(\overline d\, \tilde{\omega}).$$

If $\overline{T}$ is a weak flat current, then
\begin{align*}
\overline{\partial}\,\overline{T}(\tilde{\omega})=\overline{T}(\overline{d}\tilde{\omega}) = \lim_{n \to \infty} T(d \omega_n) = \lim_{n \to \infty} \partial T(\omega_n) = \overline{\partial T}(\tilde{\omega})
\end{align*}
This implies
$$\overline{\partial}\,\overline{\partial}\,\overline{T}(\tilde{\omega}) = \overline{\partial\partial T}(\tilde{\omega}) = 0.$$
Thus, the set of weak flat currents $\overline {\al F^p(X)}$ with the boundary operator $\overline \partial$ form a cochain complex whose cohomology is called the weak flat cohomology. We denote this cohomology by $\overline{\al F}H^*(X)$.
 
 Notice that $T \to \overline T$ gives a canonical chain maps from the set of flat currents to the set of weak flat currents on $X$. This induces a map on the cohomology $\al FH^*(X) \to \overline{\al F}H^*(X)$. 
 
 A weak flat current is defined as the unique extension of a flat current to a linear functional on weakly differentiable forms. Since any flat current extends, this implies that weak flat currents are in bijection with flat currents. The boundary operator on weak flat currents commutes with this extension, so the cohomology groups of flat and weak flat currents are isomorphic. We have the following lemma:
 
 \begin{lem}\label{Inc} The mapping $\phi:\al F H^p(X) \rightarrow \overline{\al F}H^p(X)$ induced by extending flat currents to weakly weakly flat currents is an isomorphism. Moreover, the restriction of weak flat currents on smooth forms induces the inverse of $\phi$ on the cohomology level.
 \end{lem}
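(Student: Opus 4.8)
The plan is to exhibit the extension map $E\colon T \mapsto \overline T$ and the restriction map $\rho\colon \overline T \mapsto \overline T|_{\Omega^\bullet_c(X)}$ as mutually inverse cochain isomorphisms between the flat complex $(\al F^\bullet(X),\partial)$ and the weak flat complex $(\overline{\al F^\bullet(X)},\overline\partial)$; the asserted isomorphism $\phi$ on cohomology, together with the statement that restriction induces its inverse, will then follow at once from the functoriality of cohomology. The only substantive point is the well-definedness and continuity of $E$, everything else being formal.

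First I would verify that $\overline T$ is well defined, i.e. that $\lim_n T(\omega_n)$ exists and is independent of the approximating sequence $\{\omega_n\}$. Here I would invoke Federer's structure theorem (Theorem \ref{thm21}) to write a flat current $T$ as $T = R + \partial S$ with $R \in \al F^p(X)$ and $S \in \al F^{p-1}(X)$ of finite mass. For smooth forms one then has $T(\omega_n) = R(\omega_n) + S(d\omega_n)$, whence
\[
|T(\omega_n) - T(\omega_m)| \le M(R)\,|\omega_n - \omega_m| + M(S)\,|d\omega_n - d\omega_m|.
\]
Thus, provided the approximation is controlled in the graph norm $|\alpha| + |\overline d\alpha|$ — which is exactly what the mollification $\alpha \ast \eta_\epsilon$ supplies, since $d(\alpha \ast \eta_\epsilon) = (\overline d\alpha)\ast \eta_\epsilon$ converges uniformly to $\overline d\alpha$ — the sequence $\{T(\omega_n)\}$ is Cauchy and its limit equals $R(\tilde\omega) + S(\overline d\tilde\omega)$. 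The right-hand side is manifestly independent of the chosen sequence, so $\overline T$ is a well-defined linear functional, continuous in the weak flat norm.

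Next I would record that $E$ is a cochain map, which is precisely the computation $\overline\partial\,\overline T = \overline{\partial T}$ already carried out above. For the inverse, I would observe that since $\Omega^\bullet_c(X)$ is dense in $\overline{\Omega^\bullet_c(X)}$ in the relevant norm, a weak flat current is determined by its values on smooth forms; hence $\rho \circ E = \mathrm{id}$ on $\al F^\bullet(X)$, while $E \circ \rho = \mathrm{id}$ on $\overline{\al F^\bullet(X)}$ because every weak flat current is, by definition, the extension of the flat current obtained by restricting it to smooth forms. A one-line check, $\overline\partial\,\overline T(\omega) = \overline T(d\omega) = T(d\omega) = \partial T(\omega)$ for smooth $\omega$, shows that $\rho$ is a cochain map as well.

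Finally, since $E$ and $\rho$ are mutually inverse isomorphisms of cochain complexes, they induce mutually inverse isomorphisms on cohomology; the map induced by $E$ is $\phi$ and the map induced by $\rho$ is its inverse, which is exactly the second assertion. I expect the only genuine obstacle to be the well-definedness step: one must ensure the approximating sequences converge in the sup norm of both the form and its weak derivative, not merely in the sup norm of the form, and it is Federer's decomposition that reduces the estimate to the finite-mass currents $R$ and $S$, for which the bound $|U(\omega)| \le M(U)\,|\omega|$ applies.
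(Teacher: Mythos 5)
Your proposal is correct and follows the same overall architecture that the paper relies on: the extension map $E\colon T\mapsto \overline T$ and the restriction map $\rho$ are mutually inverse cochain maps, and the isomorphism on cohomology is then formal. The difference is that the paper offers essentially no proof of this lemma --- it simply asserts in the preceding paragraph that ``a weak flat current is defined as the unique extension of a flat current,'' that the extension is a bijection, and that it commutes with the boundary operator. You, by contrast, isolate and actually prove the one nontrivial point, namely that $\overline T(\tilde\omega)=\lim_n T(\omega_n)$ exists and is independent of the approximating sequence, and you do so by invoking Federer's structure theorem (Theorem \ref{thm21}) to write $T=R+\partial S$ with $R,S$ of finite mass, so that $|T(\omega_n)-T(\omega_m)|\le M(R)\,|\omega_n-\omega_m|+M(S)\,|d\omega_n-d\omega_m|$ and the limit is forced to equal $R(\tilde\omega)+S(\overline d\tilde\omega)$. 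This is a genuine improvement: it makes explicit that the approximating sequence must converge in the graph norm $|\alpha|+|\overline d\alpha|$ rather than merely in $|\alpha|$, a hypothesis the paper's definition of weak flat current leaves implicit, and it pins down exactly where the ``uniqueness of the extension'' claimed by the paper comes from. The one place where you inherit the paper's own unexamined assumption is the mollification step: the identity $d(\alpha\ast\eta_\epsilon)=(\overline d\alpha)\ast\eta_\epsilon$ and the uniform convergence of both convolutions are asserted in the paper for forms on the singular space $X$ without justification, and your argument depends on the same density statement; this is not a gap relative to the paper, but it is the point a referee would press on in both versions.
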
 
 
\section{A local Poincar\'e lemma for flat currents}

We prove a local Poincar\'e lemma for flat currents here. This will be used in proving a global results about the flat cohomology. 

In the following $X \subset \bb R^n$ is a locally closed definable pseudomanifold in a polynomially bounded o-minimal structure that admits smooth cell decomposition of dimension $l$ with regular part $M$.  Let $U \subset X$ be an open set in $X$ and denote by $\Omega^p_c(\overline{U})$ the set of smooth $p$-forms on $U \cap M$ that extend to a neighbourhood $V$ of $\overline{U}$ and have support in $\overline U$.

We first state the local Lipschitz retractibility result. Although this result is proved for subanalytic sets in Valette \cite{Valette1}, the proof works also for any definable set in a polynomially bounded o-minimal structure; see Remark 4.3.4 in \cite{Valette1}.

\begin{lem} \label{Lipret}
 For every $x_0 \in X$ there exist an open ball $U = B^n(x_0,\epsilon)$ of radius $\epsilon$ and a Lipschitz retraction $r : (U \cap X) \times [0,1] \rightarrow (U \cap X)$ onto $x_0$, i.e. for each $t \in [0,1]$, $r_t : (U \cap X)  \rightarrow (U\cap X)$ defined by $r_t(x) = r(x,t)$ is a Lipschitz map such that $r_0(x) = x_0$ and $r_1$ is identity on $X$. Moreover, $r_t$ preserves $U \cap M$ for $t>0$ and the derivative $d_xr_t$ tends to $0$ as $t$ tends to $0$.
\end{lem}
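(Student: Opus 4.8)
The plan is to produce the retraction as a radial contraction with respect to a definable trivialization of the distance function, and to extract the Lipschitz and derivative estimates from the preparation theorem. Write $\rho(x) = |x - x_0|$ for the distance to $x_0$; this is a definable function on $X$. First I would apply the o-minimal triviality theorem (Hardt's theorem, in the form valid over an arbitrary o-minimal structure) to $\rho$ restricted to $B^n(x_0,\epsilon)\cap X$: for $\epsilon$ small enough there is a fixed link $L = \rho^{-1}(\epsilon/2)$ and a definable homeomorphism
$$ H : (B^n(x_0,\epsilon)\cap X)\setminus\{x_0\} \;\longrightarrow\; (0,\epsilon)\times L, \qquad H(x) = (\rho(x),\lambda(x)), $$
compatible with $\rho$ in the sense that its first coordinate is $\rho$ itself. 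Because the structure admits smooth cell decomposition, the trivialization can be chosen so that $H$ restricts to a $C^r$-diffeomorphism of the regular part $(U\cap M)\setminus\{x_0\}$ onto $(0,\epsilon)\times(L\cap M)$.

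Given $H$, I would define the homotopy in these coordinates by scaling the radial variable: for $t\in(0,1]$ set $\widehat r_t(\rho,\xi) = (t\rho,\xi)$ and put
$$ r_t(x) = H^{-1}\bigl(\widehat r_t(H(x))\bigr) \quad (x\neq x_0), \qquad r_t(x_0)=x_0, \qquad r_0 \equiv x_0 . $$
With this definition the formal properties are immediate: $r_1 = \mathrm{id}$, $r_0$ is the constant map $x_0$, each $r_t$ is continuous and $r$ is jointly continuous since $\rho(r_t(x)) = t\rho(x)\to 0$ as $t\to 0$, $r_t$ carries $U\cap X$ into itself, and because $\widehat r_t$ preserves the factor $(0,\epsilon)\times(L\cap M)$ while $H$ is a diffeomorphism there, $r_t$ preserves $U\cap M$ and is smooth on it for every $t>0$.

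The whole content of the lemma is therefore the two quantitative claims: that each $r_t$ is Lipschitz, and that $\sup_x |d_x r_t|\to 0$ as $t\to 0$. This is where polynomial boundedness is essential and where I would invoke the preparation theorem for definable maps in polynomially bounded structures (Nguyen--Valette \cite{ValetteNguyen}). After a definable partition adapted to $H$, the components of $H$ and of $H^{-1}$ admit a normal form as a bounded definable unit times a power $\rho^{\lambda}$ of the distance, with finitely many \emph{finite} exponents $\lambda$; no logarithmic or exponential factors can occur precisely because the structure is polynomially bounded. In such a normal form the radial rescaling $\rho\mapsto t\rho$ changes every block by a factor comparable to a fixed power of $t$, and a direct estimate on each piece gives $\mathrm{Lip}(r_t) = O(t)$ and, after differentiating, $|d_x r_t| = O(t)$ uniformly in $x$; letting $t\to 0$ yields the stated vanishing of the derivative.

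The main obstacle is exactly this quantitative step: the trivialization $H$ is a priori only a homeomorphism, so neither the Lipschitz property nor the derivative bound survives conjugation by it automatically. Controlling them forces one to exploit the tameness of the polynomially bounded setting through the preparation theorem; in a non-polynomially-bounded structure the normal forms acquire $\log$ and $\exp$ factors and the scaling estimate fails, which is why the hypothesis cannot be dropped. This argument is the adaptation, to an arbitrary polynomially bounded o-minimal structure admitting smooth cell decomposition, of the construction carried out for subanalytic sets in Valette \cite{Valette1}; see Remark 4.3.4 there.
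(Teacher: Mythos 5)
The paper offers no proof of this lemma: it is imported wholesale from Valette \cite{Valette1} (see Remark 4.3.4 there), with the remark that the subanalytic proof goes through in any polynomially bounded o-minimal structure. So the comparison has to be with Valette's construction, and measured against that your outline has a genuine gap at precisely the step you yourself flag as ``the whole content of the lemma.'' Hardt's trivialization theorem produces a definable homeomorphism $H$ with no metric control whatsoever; conjugating the radial scaling $\widehat r_t$ by such an $H$ will in general destroy the Lipschitz property, since the angular component $\lambda(x)$ can distort distances arbitrarily badly. You propose to repair this afterwards by ``preparing'' the components of $H$ and $H^{-1}$, but the preparation theorem normalizes a \emph{given} definable function cell by cell on some partition; it does not retroactively make an arbitrary Hardt trivialization tame. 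Concretely: the units in the prepared normal form are bounded between positive constants but their derivatives are not controlled without further work, and the cells of the preparing partition need not be stable under $\rho \mapsto t\rho$, so your ``each block changes by a factor comparable to a fixed power of $t$'' estimate does not close up across cell boundaries. The actual argument runs in the opposite order: one uses the preparation theorem (equivalently, L-regular or Lipschitz cell decompositions) to \emph{construct}, by induction on dimension, a trivialization and retraction that are Lipschitz by construction --- this is Valette's ``Lipschitz conic structure,'' a substantive theorem rather than a corollary of Hardt triviality plus post hoc preparation.

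Two secondary points. First, you claim $\mathrm{Lip}(r_t) = O(t)$, but what the lemma asserts (and what is needed later) is that $r$ is Lipschitz as a map on $(U\cap X)\times[0,1]$, i.e.\ a Lipschitz constant uniform in $t$ together with Lipschitz dependence on the $t$-variable; the latter requires an estimate on $\partial r/\partial t$ that your sketch never addresses. Second, upgrading the Hardt trivialization to a $C^r$-diffeomorphism over the regular stratum is an additional input (definable $C^p$-triviality), not something Hardt's theorem provides; you would need to cite or prove it. Your paragraph explaining why polynomial boundedness is indispensable (no $\log$ or $\exp$ factors in the exponents) is correct and is indeed the reason the hypothesis appears in the paper, but it does not substitute for the missing construction.
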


The idea of the proof of our result is similar to the classical Poincar\'e lemma except there are two obstructions that we encounter. The first obstruction is that we need to take pull back of smooth forms under a Lipschitz retraction which might not be smooth. The second obstruction as we will see is that the integral involved in the proof could be an improper integral. We overcome the first obstruction by using an approximation of our Lipschitz retraction and the second by showing that the integral gives a weakly differential form. We need the following lemmas:

\begin{lem}{\label{lem_homop}}
For $0 < p \leq l$, there exists a linear map 
\begin{equation*}
\al K : \Omega^p_c(B^n(\epsilon;x_0) \cap X) \rightarrow \overline{\Omega}^{p-1}_c(B^n(\epsilon;x_0) \cap X) 
\end{equation*} such that $\al K d +\overline{d} \al K$ is the identity on $\Omega^p_c(B^n(\epsilon;x_0) \cap X)$.
\end{lem}

\begin{proof}
By Lemma \ref{Lipret} there exists a Lipschitz retraction $r : (B^n(\epsilon;x_0) \cap X) \times [0,1] \rightarrow B^n(\epsilon;x_0) \cap X$. Let $r': (B^n(\epsilon;x_0) \cap M) \times (0,1) \rightarrow B^n(\epsilon;x_0) \cap M)$ be a smooth approximation of $r$ in the sense that, given any positive continuous function $\xi : (B^n(\epsilon;x_0) \cap M) \times (0,1) \rightarrow \bb R$ decreasing fast enough there exists $r'$ such that $|r'_t(x) - r_t(x)| \leq \xi(x,t)$; see Lemma 4.2.1 in \cite{Valette1} for the construction of such a map. This $r'$ has the following properties:
	
1. The first partial derivatives of $r'$ are bounded above.
	
2. $\lim_{t \to 0} d_xr'_t = 0$ and $\lim_{t\to 1} d_xr'_t$ tends to the identity map for almost every $x$.
	
3. $r'$ preserves $M$.
	
Now, If $p > 0$, for any $p$-form $\omega$ on $B^n(\epsilon;x_0) \cap M$  we can uniquely write the $p$-form $r'^*\omega$ on $(B^n(\epsilon;x_0) \cap M) \times \bb (0,1)$ as
$$r'^*\omega = \omega_1 + \omega_2 \wedge dt,$$
where $dt$ is the standard 1-form on $(0,1)$, $\omega_1$ is a $p$-form on $(B^n(\epsilon;x_0) \cap M)\times (0,1)$ that is free from $dt$ and $\omega_2$ is a $(p-1)$-form on $B^n(\epsilon;x_0) \cap M$. 
	
Put $$\al K(\omega) = \int_{0}^1 \omega_2(x,t) dt.$$ Since the support of $\omega_2$ lies in $\overline{B^n(\epsilon;x_0) \cap M}$, it is an improper integral. We show that $\al K(\omega)$ is weakly differentiable.
	
Recall that if $\phi$ is a differential $(l-p)$-form of class $C^2$ with support in $M$ and $\alpha$ any $p$-form, then the derivative of the exterior product is given by 
$$d(\alpha \wedge \phi)  = d \alpha \wedge \phi + (-1)^p \alpha \wedge d\phi.$$ Thus, for $\alpha = r'^*(\omega)$ we have:
\begin{align*}\label{dagger}
r'^*(\omega) \wedge = (-1)^p\{d(r'^*(\omega)\wedge \phi) - d(r'^*(\omega)\wedge \phi)\}. \tag{$\dagger$}
\end{align*}	
Then,
\begin{align*}
\int_M \al K(\omega) \wedge d\phi  =& \int_M \int_0^1 \omega_2 \wedge d\phi =\quad \lim_{t\to 0} \int_{M \times [t;1]} r'^*\omega \wedge d\phi\\
=&\quad \lim_{t\to 0}(-1)^p \left \{ \int_{M \times [t;1]}  d(r'^*\omega \wedge \phi) - \int_{M \times [t,1]} d(r'^*(\omega)) \wedge \phi \right \}\tag{by \ref{dagger}}\\
=& \quad\lim_{t\to 0}(-1)^p \bigg \{ - \int_{M \times \{t\}}  \omega_1 \wedge \phi + \int_{M \times \{1\}} r'^*\omega \wedge \phi \\
&\quad - \int_{M \times [t,1]} d (\omega_1 + \omega_2 dt) \wedge \phi \bigg \} \tag{By Stokes' formula}\\
=&\quad  (-1)^p\left \{ \int_{M} \omega \wedge \phi - \int_M \int_0^1 d\omega_2 \wedge \phi\right \} \tag{Since $\lim_{t \to 0} \omega_1(x,t) = 0$, $\lim_{t\to 1} r'^*\omega(x,t) = \omega(x)$}
\end{align*}
Finally, since the pullback commutes with the exterior derivative, we have:
\begin{align*}
\int_M \al K(\omega) \wedge d\phi	=&\quad (-1)^p\int_M (\omega - \al Kd\omega) \wedge \phi
\end{align*}
	
This implies that $\al K \omega$ is weakly differentiable and $\overline d\al K \omega=\omega - \al K d \omega$. This also shows that if $\al K$ is considered as a linear map from $\Omega^p_c(B^n(\epsilon;x_0))$ to $\overline{\Omega}^{p-1}_c(B^n(\epsilon;x_0) \cap M)$, then $\al K d + \overline d \al K$ is the identity on $\Omega^p_c(B^n(\epsilon;x_0) \cap M)$. This concludes the Lemma.
\end{proof}

We now show that the linear operator $K$ obtained from the above lemma induces a linear operator on currents. This operator on currents then gives the required Poincar\'e lemma.

\begin{lem}\label{lem_fhomop} Let $p \geq 0$ and let $T$ be a closed weak flat $(p+1)$-current on $B^n(\epsilon;x_0) \cap X$, then there exists a weak flat ${p}$-current $S$ on $B^n(\epsilon;x_0) \cap M$ such that $\overline{\partial} S = T$.
\end{lem}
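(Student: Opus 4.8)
The plan is to build $S$ by transposing the homotopy operator $\al K$ of Lemma \ref{lem_homop}. Since $S$ is to be a $p$-current it must act on $(l-p)$-forms, and for a smooth $(l-p)$-form $\omega$ the form $\al K\omega$ is a weakly differentiable $(l-(p+1))$-form, which is exactly the type of form on which the weak flat current $T$ is defined. I therefore set
$$S(\omega) = T(\al K\omega), \qquad \omega \in \Omega^{l-p}_c(B^n(\epsilon;x_0)\cap X).$$
Two things then remain: that $S$ is genuinely a weak flat current, and that $\overline{\partial}S = T$.

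The harder part, and the step I expect to be the main obstacle, is to show that $S$ is a weak flat current rather than merely a linear functional on smooth forms. Here I would invoke Federer's structure theorem (Theorem \ref{thm21}) to write $T = R + \partial S'$ with $M(R), M(S') < \infty$. The decisive estimate is that $\al K$ is bounded for the comass norm, so that the transpose carries finite-mass currents to finite-mass currents; this rests on the properties of the smooth approximation $r'$ recorded in the proof of Lemma \ref{lem_homop}, namely that $dr'$ is bounded (so the pullback $r'^*$ and the fibre integral defining $\al K$ do not increase comass beyond a fixed factor) and that $d_x r'_t \to 0$ as $t \to 0$ (which is what makes the improper fibre integral converge). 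Controlling this improper integral uniformly in the comass norm is the genuinely delicate point.

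Granting the estimate, a short computation puts $S$ into Federer form. Substituting $T = R + \partial S'$ into the definition of $S$ and using $\overline{d}\al K\omega = \omega - \al K d\omega$ on smooth $(l-p)$-forms gives
$$S(\omega) = R(\al K\omega) + S'(\overline{d}\al K\omega) = (\al K^* R)(\omega) + S'(\omega) - (\al K^* S')(d\omega),$$
whence $S = \al K^* R + S' - \partial(\al K^* S')$. The currents $\al K^* R$, $S'$ and $\al K^* S'$ all have finite mass by the boundedness of the transpose, so $S$ is the sum of a finite-mass current and the boundary of a finite-mass current; Theorem \ref{thm21} then shows $S$ is flat, and its canonical weak extension is the desired weak flat current.

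Finally I would check the boundary identity, the soft part, on the weak flat current now in hand. Testing against a smooth $(l-(p+1))$-form $\eta$ and using $\overline{d}\eta = d\eta$ for smooth forms,
$$\overline{\partial}S(\eta) = S(d\eta) = T(\al K d\eta).$$
The homotopy formula $\al K d + \overline{d}\al K = \mathrm{id}$ of Lemma \ref{lem_homop}, applied to $\eta$, gives $\al K d\eta = \eta - \overline{d}\al K\eta$, so that
$$\overline{\partial}S(\eta) = T(\eta) - T(\overline{d}\al K\eta) = T(\eta) - \overline{\partial}T(\al K\eta) = T(\eta),$$
the last step because $T$ is closed. Since weak flat currents are determined by their restriction to smooth forms (Lemma \ref{Inc}), this yields $\overline{\partial}S = T$ and completes the argument.
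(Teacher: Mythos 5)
Your construction of $S$ and your verification of $\overline{\partial}S=T$ coincide with the paper's: both transpose the homotopy operator, setting $S(\omega)=T(\al K\omega)$, and both derive $\overline{\partial}\al K^{\#}T+\al K^{\#}\overline{\partial}T=T$ from $\al Kd+\overline{d}\al K=\mathrm{id}$. Where you diverge is in showing that $S$ is flat. The paper works directly from the definition of flat currents as the flat-norm closure of the normal currents: it takes normal $T_n\to T$ in the norm $\ak F$, shows $M(\al K^{\#}T_n)\leq M(T_n)$ so that $\al K^{\#}$ preserves normality, and argues $\ak F(\al K^{\#}T_n-\al K^{\#}T)\leq\ak F(T_n-T)$ so that $\al K^{\#}T_n\to\al K^{\#}T$ in flat norm. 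You instead decompose $T=R+\partial S'$ via Theorem \ref{thm21} and push the decomposition through $\al K^*$. Your route avoids the paper's flat-norm continuity estimate, and your identification of the key analytic point (boundedness of $\al K$ in the comass norm, coming from the bounded derivatives of $r'$ and the convergence of the improper fibre integral) is exactly the mass estimate the paper proves.

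There is, however, a gap in your final step. Theorem \ref{thm21} characterizes flat currents as those of the form $R+\partial S$ with $R$ and $S$ \emph{flat} currents of finite mass, not merely currents of finite mass; a finite-mass current need not be flat (the standard example is a finite-mass current carried by a rectifiable set but paired with a non-tangent vector field). You verify that $\al K^*R$, $S'$ and $\al K^*S'$ have finite mass, but not that $\al K^*R$ and $\al K^*S'$ are themselves flat, so the ``if'' direction of Theorem \ref{thm21} does not yet apply. To close this you need to know that $\al K^*$ carries finite-mass flat currents to flat currents --- for instance by approximating $R$ and $S'$ in mass by normal currents and using your estimate $M(\al K^*N)\leq M(N)$ --- which is in substance the normal-approximation argument the paper runs on $T$ itself. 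So the detour through Federer's decomposition does not actually dispense with that step; it only relocates it.
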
 

\begin{proof} 
	
	
	 The operator $\al K : \Omega^{l-p}_c(B^n(\epsilon;x_0) \cap M) \rightarrow \overline{\Omega}^{l-p-1}_c(B^n(\epsilon;x_0) \cap M)$ provided by Lemma \ref{lem_homop} induces a linear operator 
	\begin{equation*}
	\al K^{\#} : \overline{\fs C^{p+1}}(B^n(\epsilon;x_0) \cap M) \rightarrow  \overline{\fs C}^{p}(B^n(\epsilon;x_0) \cap M)
	\end{equation*}
	defined by $\al K^{\#}(T)(\omega) = T(\al K(\omega))$; here $\overline{\fs C}$ stands for the set of weak currents. Since $\al K d + \overline d \al K$ is the identity, it is easy to see that
	\begin{equation}
	\label{Chomop}
	\overline{\partial} \al K^{\#}(T) + \al K^{\#} \overline{\partial}(T) = T.
	\end{equation}
	Thus if $T$ is a weak closed current, by (\ref{Chomop}), if we put $\al K^{\#}(T)=S$, we have $\overline{\partial} S = T$.
	
	It remains to show that if $T$ is a weak flat current then so is $S$. So suppose $T$ is a weak flat current. Then, by definition there exists a sequence of weak normal currents $T_n$ that converges to $T$ under the flat norm. We will show that for large $n$, $\al K^{\#} T_n$ are also weak normal currents and that $\al K^{\#} T_n$ converge to $\al K^{\#} T$ under the flat norm.
	
	To prove that $\al K^{\#}$ maps normal currents to normal currents it is enough to show that if $T$ has finite mass then so has $\al K^{\#} T$. First notice that the Lipschitz retration $r'$, as in the previous lemma, has bounded partial derivatives. So, if $\omega$ is a bounded differential form then so is $K(\omega)$, and moreover $|K(\omega)| < |\omega|$. Now, if $T$ is such that $M(T) < \infty$. Then,
	
	\begin{align*}
	M (\al K^{\#} T) &= M (T(K)) \\
	&= \sup \{T(K(\omega)) : |\omega| <1\} \\
	& \leq \sup \{T(\omega)  : |\omega| < 1\} \tag{since $|K(\omega)| < |\omega|$}  \\
	& \leq M(T) < \infty \tag{since $T$ is normal}
	\end{align*}
	
	Finally we show that $K^{\#}T_n$ converge to $K^{\#}T$ under the flat norm. Since $T_n$ converges to $T$ under the flat norm, for every $\epsilon > 0$ there exists an $n_0 \in \bb N$ such that for all $n > n_0$, 
	$\ak F (T_n - T) < \epsilon$. But then,
	
	\begin{align*}
	\ak F(\al K^{\#} T_n - \al K^{\#}T)  &= \ak F({T_n(K) - T(K)})\\
	&= \ak F((T_n - T)(K))\\
	&= \inf \{M((T_n - T)(K) - \partial A) + M(A)\}\\
	&\leq \inf \{M((T_n - T) - \partial A) + M(A)\} \\
	&\leq \ak F(T_n - T) < \epsilon
	\end{align*}
	
	Thus this shows that $\al K^{\#}$ maps weak flat currents to weak flat currents. This concludes the lemma.
\end{proof}
 From Lemma \ref{Inc}, Lemma \ref{lem_homop} and Lemma \ref{lem_fhomop} we immediately get the local Poincar\'e lemma for flat currents on definable pseudomanifolds in polynomially bounded o-minimal structures:
 
 \begin{thm}\label{lem_Poincare}
 	Let $X \subset \bb R^n$ be a definable pseudomanifold in a polynomially bounded o-minimal structure of dimension $l$ and let $M$ be its regular part. Then, for every $x \in X$ there exists a Lipschitz retractible neighbourhood $U$ of $X$ around $x$ such that any closed flat $p$-current ($0< p \leq l$), on $U \cap M$ is exact.
 \end{thm}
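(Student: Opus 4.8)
\section{Proof proposal}

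The plan is to transport the problem into the complex of weak flat currents, solve it there with the explicit homotopy operator, and then transport the conclusion back to flat currents. First I would fix $x \in X$ and invoke Lemma \ref{Lipret} to obtain the radius $\epsilon$ together with the Lipschitz retraction onto $x$; set $U = B^n(x,\epsilon)$, so that $U \cap X$ is a definable pseudomanifold with regular part $U \cap M$ carrying the contracting homotopy underlying Lemma \ref{lem_homop}. Throughout I would keep this one neighbourhood $U$ fixed, so that the homotopy operator $\al K$, its dual $\al K^{\#}$, and the isomorphism of Lemma \ref{Inc} all refer to it.

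Now let $T$ be a closed flat $p$-current on $U \cap M$ with $0 < p \leq l$, and let $\overline T$ be its weak extension as in Lemma \ref{Inc}. Since the boundary operator commutes with extension, $\overline T$ is again closed: $\overline{\partial}\,\overline T = \overline{\partial T} = 0$. Because $p \geq 1$, I would apply Lemma \ref{lem_fhomop} with its index equal to $p-1$ (so that $(p-1)+1 = p$): this produces a weak flat $(p-1)$-current $\overline S = \al K^{\#}\overline T$ on $U \cap M$ with $\overline{\partial}\,\overline S = \overline T$. Concretely this is the identity $\overline{\partial}\al K^{\#}\overline T + \al K^{\#}\overline{\partial}\,\overline T = \overline T$ of (\ref{Chomop}) specialized to the closed case. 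Hence $\overline T$ is exact in the weak flat complex, i.e. its class vanishes in $\overline{\al F}H^p(U \cap X)$.

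Finally I would descend back to flat currents using Lemma \ref{Inc}, which asserts that extension induces an isomorphism $\phi : \al FH^p(U \cap X) \to \overline{\al F}H^p(U \cap X)$. By construction $\phi[T] = [\overline T] = 0$, so injectivity of $\phi$ forces $[T] = 0$ in $\al FH^p(U \cap X)$; that is exactly the assertion that $T$ is exact as a flat current on $U \cap M$, which proves the theorem.

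The one step requiring genuine care, and the place I would expect to be the main obstacle, is this last descent: one must know that $\phi$ really carries the flat class $[T]$ to the weak flat class $[\overline T]$, i.e. that extension is a chain map inducing the claimed cohomology isomorphism, rather than merely that a primitive $\overline S$ exists at the level of currents. This is precisely the content of Lemma \ref{Inc}. Equivalently, one can avoid quoting the isomorphism abstractly and instead make the descent explicit, restricting the weak flat primitive $\overline S$ to smooth forms to obtain a bona fide flat primitive of $T$, using the inverse of $\phi$ furnished in the same lemma. The only other subtlety is the index shift between Lemma \ref{lem_fhomop}, phrased for $(p+1)$-currents, and the range $0 < p \leq l$ of the statement; this is harmless once the substitution above is made.
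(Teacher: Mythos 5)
Your proposal is correct and follows essentially the same route as the paper, which derives the theorem directly by combining Lemma \ref{Inc}, Lemma \ref{lem_homop} and Lemma \ref{lem_fhomop} exactly as you do: pass to the weak flat complex, apply the homotopy operator $\al K^{\#}$ to produce a weak flat primitive, and descend via the isomorphism of Lemma \ref{Inc}. The paper states this derivation in one sentence without detail, so your explicit handling of the index shift and of the descent step simply fills in what the paper leaves implicit.
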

 
 \section{Normal pseudomanifolds}
 
 Let $X \subset \bb R^n$ be a definable pseudomanifold in a polynomially bounded o-minimal structure and $M$ be its regular part. The link $L(X,x)$ of $X$ at a point $x \in X$ is the intersection $S^{n-1}(\epsilon; x) \cap X$ of $X$ with a small sphere. Then, $X$ is said to be normal if $L(X,x)$ is connected at all points $x \in X$. We have the following result about the existence of normalizations of pseudomanifolds; see Sections 4.1 and 4.2 in Goresky and Macpherson \cite{Goresky1}:
 
 \begin{thm}\label{existnorm} For any pseudomanifold $X$, there exists a normal pseudomanifold $\tilde{X}$ (normalization of $X$) and a finite-to-one continuous map $\pi : \tilde{X} \to X$ such that $\pi^{-1}(M)$ is homeomorphic to $M$.
 \end{thm}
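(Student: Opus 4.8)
The plan is to build $\tilde X$ explicitly by separating the local branches of the regular part $M$ at each point of $X$, exploiting the local conic structure available in o-minimal geometry. Write $\Sigma = X - M$ for the singular locus, so $\dim \Sigma \le l-2$. For a point $x \in X$ with a shrinking neighbourhood basis $U_1 \supset U_2 \supset \cdots$ in $X$, the connected components of the punctured sets $U_i \cap M$ form an inverse system, and I would define a \emph{local branch} of $X$ at $x$ to be an element of the inverse limit $\mathcal B_x = \varprojlim \pi_0(U_i \cap M)$. The local conic structure theorem for definable sets gives, for small $\epsilon$, a definable homeomorphism of $B^n(x,\epsilon)\cap X$ onto the cone over the link $L(X,x)$; since $L(X,x)$ is a compact definable set it has finitely many connected components, so $\pi_0(U_i \cap M)$ stabilises and $\mathcal B_x$ is finite. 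I then set $\tilde X = \{(x,b) : x \in X,\ b \in \mathcal B_x\}$ and $\pi(x,b) = x$.

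Next I would topologise $\tilde X$. For an open set $U \subseteq X$ and a connected component $V$ of $U \cap M$, let $\langle U, V\rangle$ be the set of pairs $(x,b)$ with $x \in U$ for which $V$ represents the branch $b$ near $x$; the collection of all such $\langle U, V\rangle$ is declared a basis. With this topology $\pi$ is continuous because $\pi^{-1}(U) = \bigcup_V \langle U, V\rangle$, and $\pi$ is finite-to-one since $\pi^{-1}(x) = \mathcal B_x$ is finite. Over the regular part the branch is unique: if $x \in M$ then $M$ is a manifold near $x$, hence locally connected, so $U \cap M$ is connected for small $U$ and $\pi^{-1}(x)$ is a single point. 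It follows that $\pi$ restricts to a continuous bijection $\pi^{-1}(M) \to M$ which is a local homeomorphism, hence a homeomorphism, giving the required $\pi^{-1}(M) \cong M$.

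To see that $\tilde X$ is a definable pseudomanifold I would fix a definable triangulation of the pair $(X,\Sigma)$; the branch decomposition is then encoded combinatorially by the way top-dimensional simplices meet around each simplex of $\Sigma$, and the gluing producing $\tilde X$ is piecewise linear, so it realises $\tilde X$ as a definable set of dimension $l$. Its regular part contains $\pi^{-1}(M) \cong M$, which is dense because every branch point is a limit of points of its own branch, and its singular locus is $\pi^{-1}(\Sigma)$, of dimension $\le \dim \Sigma \le l-2$ since the finite-to-one map $\pi$ does not raise dimension. For normality, observe that the link $L(\tilde X,(x,b))$ has regular part the connected component $c$ of $L(X,x)\cap M$ representing $b$; being the closure of the connected set $c$, it is connected. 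Making this precise at every stratum is handled by induction on $l$, applying the inductive hypothesis to each link $L(X,x)$, a definable pseudomanifold of dimension $l-1$.

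The main obstacle is exactly this last point. Connectedness of the link at the cone tips $(x,b)$ is immediate from the definition of the branches, but one must propagate normality through the links themselves and simultaneously check that all the ingredients — connected components, their closures, and the cone gluings — remain definable in the ambient polynomially bounded o-minimal structure, which is where the genuine work lies. By contrast, finiteness of the fibres and the homeomorphism over $M$ are comparatively routine consequences of definable conic triviality and the local connectedness of manifolds.
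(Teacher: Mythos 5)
Your construction is correct in outline, but it takes a genuinely different route from the paper. The paper treats this statement as a citation to Goresky--MacPherson and, for the definable analogue (Lemma~\ref{lemnorm}), builds $\tilde X$ extrinsically: fix a $C^0$-triangulation $T:K\to X$, take the disjoint union of the closures of the $l$-dimensional open simplices, and identify two $(l-1)$-dimensional faces exactly when they coincide in $K$. That construction hands you a simplicial complex for free, so definability, the pseudomanifold conditions, and the homeomorphism over the complement of the $(l-2)$-skeleton are read off combinatorially; the cost is an a priori dependence on the chosen triangulation. Your branch space $\tilde X=\{(x,b): b\in\varprojlim \pi_0(U_i\cap M)\}$ is the intrinsic, choice-free version of the same object: it makes the fibres of $\pi$ transparent (they \emph{are} the local branches) and gives the homeomorphism over $M$ and finiteness of fibres almost by definition, but it needs the local conic structure theorem to see that $\pi_0(U_i\cap M)$ stabilises, and you ultimately fall back on a triangulation anyway to realise $\tilde X$ as a definable set of the right dimension --- at which point you are implicitly reproving that your branch space agrees with the paper's simplicial gluing. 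Two points deserve a sentence more than you give them: (i) your normality argument should say explicitly that the regular part of the link of $\tilde X$ at $(x,b)$ is a \emph{dense} connected subset homeomorphic to the component $c$ of $L(X,x)\cap M$ (density follows from the conic structure transported through $\pi$; the link itself is a branched cover of $\overline{c}$, not $\overline{c}$, so ``closure of $c$'' is shorthand); once density is in hand the closure-of-a-connected-set argument works directly and the induction on $l$ you propose is not needed; (ii) Hausdorffness of your topology, which follows because distinct branches over the same $x$ are represented by disjoint components $V\neq V'$ of $U\cap M$, so $\langle U,V\rangle\cap\langle U,V'\rangle=\emptyset$.
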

 
 The relation between the intersection cohomology\footnote{The original statement relates only homology but it is easy to see that it passes on to intersection cohomology as well.} of $X$ and the singular cohomology of $X$ is as follows.

 \begin{thm}\label{GMnorm} Let $X$ be a pseudomanifold. Then the intersection cohomology of $X$ in the top perversity is isomorphic to the singular cohomology of its normalization.
 \end{thm}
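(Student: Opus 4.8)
The plan is to prove Theorem \ref{GMnorm} by reducing the statement about intersection cohomology in the top perversity to a statement about the singular cohomology of the normalization, using the normalization map $\pi : \tilde X \to X$ from Theorem \ref{existnorm}. The top perversity $\bar t$ is the largest allowable perversity, given by $\bar t(k) = k-2$, and the governing heuristic is that in the top perversity the allowability conditions on chains become as permissive as possible, so intersection chains are forced to interact with the singular strata only in the codimension-one-in-the-link sense — precisely the condition that normalization is designed to resolve. Concretely, I would recall the standard fact (Goresky--MacPherson) that intersection homology is a topological invariant independent of the stratification, and that for the top perversity the allowability inequality $\dim(\xi \cap X_{l-k}) \le i - k + \bar t(k) = i - 2$ means intersection $i$-chains may meet the singular locus $\Sigma = X - M$ (of dimension $\le l-2$) in the largest dimension permitted by a chain that is otherwise generic.

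First I would set up the comparison map. Since $\pi^{-1}(M)$ is homeomorphic to $M$ and $\pi$ is finite-to-one and continuous, $\pi$ is an isomorphism over the regular locus and collapses the extra branches of links over the singular locus of $\tilde X$. I would argue that $\pi_*$ (or its dual $\pi^*$ on cohomology) induces the desired isomorphism $IH^*_{\bar t}(X) \cong H^*(\tilde X)$, and that the ordinary singular cohomology of $\tilde X$ already equals $IH^*_{\bar t}(\tilde X)$ because on a normal pseudomanifold the links are connected, which is exactly the condition that makes top-perversity intersection cohomology coincide with ordinary cohomology. The key mechanism is that connectivity of the link $L(X,x)$ forces $H^0(L) = H^0(\mathrm{cone}\,L)$ to match, so the local calculation of the intersection cohomology sheaf at a singular point reduces to the cone formula $IH^i_{\bar t}(cL) \cong H^i(\mathrm{pt})$ in low degrees and vanishes appropriately in high degrees, which is the same truncation that defines ordinary cohomology once the link is connected.

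The cleanest route is via the Deligne sheaf characterization: the intersection cohomology complex $\mathbf{IC}^{\bullet}_{\bar t}$ is the Deligne construction with truncation levels determined by $\bar t$, and for the top perversity these truncations are so mild that $\mathbf{IC}^{\bullet}_{\bar t}(X) \cong R\pi_* \mathbb{R}_{\tilde X}$, the pushforward of the constant sheaf on the normalization. I would verify this by checking the axiomatic conditions $[\mathrm{AX1}]$ characterizing the Deligne sheaf stalk-by-stalk: the costalk/stalk vanishing conditions for $\bar t$ are met by $R\pi_*\mathbb{R}$ precisely because normality (connected links) guarantees $R^0\pi_* \mathbb{R}$ is the constant sheaf and the higher direct images fit the truncation bounds. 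Taking hypercohomology then yields $IH^*_{\bar t}(X) \cong \mathbb{H}^*(X; R\pi_*\mathbb{R}_{\tilde X}) \cong H^*(\tilde X)$ by the Leray spectral sequence degenerating (since $\pi$ is finite, $R^i\pi_* = 0$ for $i>0$).

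The main obstacle I anticipate is making the identification $\mathbf{IC}^{\bullet}_{\bar t}(X) \cong R\pi_*\mathbb{R}_{\tilde X}$ rigorous at the singular strata of positive codimension greater than two, where $\tilde X$ may itself have residual singularities; one must confirm that normality in the sense of connected links at \emph{every} point (not merely codimension-two points) is what the hypothesis provides, and that the finite map $\pi$ interacts correctly with the stratification so that $R\pi_*\mathbb{R}$ genuinely satisfies the top-perversity support and cosupport conditions at each stratum. Here I would lean on the definable setting: because $X$ and $\tilde X$ are definable in a polynomially bounded o-minimal structure, the stratifications and the map $\pi$ are definable, the links are definably (hence topologically) well-behaved, and one can run the stalk computation stratum by stratum using definable cell decomposition. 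This gives the \emph{definable version} of the result promised in the introduction, and it is this definable refinement — ensuring all the topological inputs (links, cone formulas, finiteness of $\pi$) are controlled definably — that constitutes the real work beyond quoting the classical Goresky--MacPherson statement.
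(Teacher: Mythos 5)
The paper offers no proof of Theorem \ref{GMnorm}: it is quoted directly from Goresky--MacPherson (with a footnote asserting that their homology statement dualizes to cohomology), so the only question is whether your argument stands on its own. It does not: the central step, the claimed quasi-isomorphism $\mathbf{IC}^{\bullet}_{\bar t} \cong R\pi_*\mathbb{R}_{\tilde X}$, is false. The top-perversity truncations in the Deligne construction are not ``so mild'' as to be vacuous: at a stratum of codimension $k$ one truncates at degree $\bar t(k)=k-2$, so the stalk of $\mathbf{IC}^{\bullet}_{\bar t}$ at such a point retains $H^j(L)$ of the link for \emph{all} $0\le j\le k-2$, whereas $R\pi_*\mathbb{R}_{\tilde X}$ is concentrated in degree $0$. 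The costalk/attaching condition you propose to verify requires $\mathcal{H}^j(R\pi_*\mathbb{R})_x \to H^j(U_x-\Sigma)$ to be an isomorphism for every $j\le k-2$; for $1\le j\le k-2$ this would force the deleted neighbourhood to have vanishing cohomology, which fails in general. A concrete counterexample is the suspension of the torus, $X=\Sigma T^2$: it is already normal, so $\tilde X=X$ and $R\pi_*\mathbb{R}=\mathbb{R}_X$, yet at a cone point $k=3$ and $H^1(T^2)=\mathbb{R}^2\neq 0$, so the constant sheaf violates the attaching axiom and is not $\mathbf{IC}^{\bullet}_{\bar t}$. A related indexing problem compounds this: hypercohomology of $\mathbf{IC}^{\bullet}_{\bar t}$ computes $IH^{\bar t}_{n-\ast}(X)$, not the linear dual of $IH^{\bar t}_{\ast}(X)$; for $\Sigma T^2$ one gets $\mathbb{H}^1(\mathbf{IC}_{\bar t})=IH^{\bar t}_2=\mathbb{R}^2$ while $H^1(\tilde X)=0$, so even granting your sheaf identification you would be proving a statement that is not the theorem (which, per the paper's footnote, is the dual of $IH^{\bar t}_i(X)\cong H_i(\tilde X)$).

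The classical argument is chain-level and runs in two steps: first, $IH^{\bar p}_{\ast}(X)\cong IH^{\bar p}_{\ast}(\tilde X)$ for every perversity, because allowable chains and their boundaries are determined by their restrictions to the regular part, over which $\pi$ is a homeomorphism; second, on a \emph{normal} pseudomanifold the top-perversity allowability conditions are satisfied by chains in general position, giving $IH^{\bar t}_{\ast}(\tilde X)\cong H_{\ast}(\tilde X)$, with connectivity of links entering exactly where you expect (degree $0$) but with the higher-degree comparison handled by general position of chains, not by a vanishing of link cohomology. If you insist on a sheaf-theoretic route, the correct comparison object is the pushforward of the complex of ordinary Borel--Moore singular chains on $\tilde X$, not $R\pi_*\mathbb{R}_{\tilde X}$. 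Finally, the definable refinement you devote your last paragraph to is not part of this statement at all --- Theorem \ref{GMnorm} is purely topological, and the definability of the normalization is the content of the separate Lemma \ref{lemnorm}.
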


The above results do not guarantee that the normalizations are definable. We will show that every definable pseudomanifold admits a definable normalization. The following lemma applies to definable sets in any $o$-minimal structure. The construction is similar to that given in \cite{Goresky1}.

\begin{lem}\label{lemnorm}
For any definable pseudomanifold $X$ with regular part $M$, there exists normal definable pseudomanifold $\tilde{X}$ with a continuous map $\pi : \tilde{X} \to X$ such that $\pi^{-1}(M)$ is homeomorphic to $M$. 
\end{lem}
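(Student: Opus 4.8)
The plan is to realize the Goresky--Macpherson normalization by a definable gluing of the local branches of $M$ around $X$, and then to verify that o-minimality forces the resulting space to be definable, matching the requirements of Theorem \ref{existnorm}. First I would recall the underlying topological picture: for $x\in X$ and small $\epsilon$, the intersection $B^n(x,\epsilon)\cap X$ is (definably) homeomorphic to the open cone on the link $L(X,x)$, and the \emph{local branches} of $X$ at $x$ are the connected components of $(B^n(x,\epsilon)\cap X)\cap M$. Over a regular point the link is a sphere, so there is a single branch and $\pi^{-1}(M)$ will be a homeomorphic copy of $M$; over a singular point the branches are exactly the sheets to be separated, and each has a connected link, which is what will make $\tilde X$ normal.

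To turn this into a definable object I would first use the definable local conic structure theorem together with definable choice (available since the structure expands the ordered field $\bb R$) to fix a definable radius function $\epsilon:X\to(0,\infty)$ for which $B^n(x,\epsilon(x))\cap X$ is conically a cone on $L(X,x)$ for every $x$, and for which $\epsilon(x)\le d(x,X\setminus M)$ when $x\in M$ so that no spurious branches appear at regular points. I then set
$$W=\{(x,y)\in X\times M : |y-x|<\epsilon(x)\},$$
a definable set whose fibre $W_x$ is the regular part of the conic neighbourhood of $x$, so that the connected components of $W_x$ are precisely the local branches at $x$. I would then define $\tilde X$ to be the quotient of $W$ by the relation $E$ that identifies $(x,y)$ with $(x,y')$ when $y$ and $y'$ lie in the same connected component of $W_x$, with $\pi$ induced by $(x,y)\mapsto x$.

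The crux is the definability of this quotient. By Hardt's definable trivialization theorem there is a finite definable partition of $X$ over each piece of which the family $W\to X$ is definably trivial; pulling back the finitely many, definable connected components of the model fibre shows that $E$ is a definable equivalence relation. Applying definable choice to the definable family of $E$-classes produces an $E$-invariant definable map $c:W\to\bb R^N$ selecting one representative per class, so that $\tilde X:=\{(x,c(x,y)):(x,y)\in W\}$ is a definable set and $\pi(x,z)=x$ is a definable, continuous, finite-to-one map. It then remains to verify that the subspace topology on $\tilde X$ agrees with the quotient topology, that $\pi$ restricts to a homeomorphism over $M$ (each $W_x$ being connected for $x\in M$ by the choice of $\epsilon$), that $\tilde X$ is a pseudomanifold (its singular set $\pi^{-1}(X\setminus M)$ has dimension $\le l-2$ because $\pi$ is finite-to-one and hence dimension preserving), and that it is normal (the link of $\tilde X$ at a point over $x$ in a branch $C$ is the closure of the single connected component $C$ of the regular part of $L(X,x)$, hence connected).

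I expect the main obstacle to be precisely the definability of the quotient together with the matching of its two topologies: the branch structure is \emph{a priori} a germ-theoretic, hence non-definable, datum, and the real work is to replace it by the genuinely definable relation $E$ via Hardt triviality and to realize the quotient as an honest definable subset carrying the correct topology via definable choice. A secondary delicate point is the choice of the radius function $\epsilon$, which must be definable and small enough that $W_x$ reflects the true local branch structure near $X\setminus M$ while producing no extra components at regular points.
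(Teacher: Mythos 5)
Your route is genuinely different from the paper's: you build the normalization intrinsically as the ``space of local branches'' of $M$ along $X$, whereas the paper takes a definable $C^0$-triangulation $T:K\to X$ (Coste), forms the disjoint union of the closures of the $l$-dimensional open simplices, and glues two such closed simplices along an $(l-1)$-face whenever those faces coincide in $K$; the resulting simplicial complex is definable essentially for free, and the induced map is a homeomorphism off the $(l-2)$-skeleton. Your branch-theoretic picture is the conceptually ``right'' description of the normalization, and the use of Hardt trivialization to make the equivalence relation $E$ definable is sound. But the step you yourself flag as the crux is where the argument actually breaks: definable choice applied to the $E$-classes gives a definable set of representatives $\{(x,c(x,y))\}$, and there is no reason the subspace topology on this set agrees with the quotient topology on $W/E$. (Compare the quotient of $[0,1]$ identifying the endpoints: a definable selection yields $[0,1)$, not the circle.) The standard definable-quotient theorem of van den Dries would repair this, but it requires the equivalence relation to be definably proper over $W$, and here the $E$-classes are the branches $\{x\}\times C$ with $C$ an \emph{open} connected subset of $W_x$ --- non-compact fibres, so properness fails and the theorem does not apply as stated. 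Since no alternative mechanism is offered for endowing the quotient with a definable topology, this is a genuine gap rather than a routine verification.

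A secondary concern is the behaviour of your radius function near the singular set: you force $\epsilon(x)\le d(x,X\setminus M)$ on $M$, so $\epsilon\to 0$ as $x$ approaches a singular point, and one must then check that the quotient topology on $W/E$ really makes a regular point $(x',W_{x'})$ converge to the correct branch-point $(x,C)$ when $x'\to x$ inside $C$; because the classes are non-compact, saturated neighbourhoods are delicate and this is not automatic. The paper's triangulation construction sidesteps both difficulties at once: the gluing is a finite combinatorial identification of closed (hence compact, semialgebraic) simplices, so definability and the correctness of the topology are immediate. If you want to keep your intrinsic construction, the cleanest fix is to replace the open branches by their traces on small closed links (so the classes become compact) or to fall back on a triangulation to witness the quotient definably --- at which point you have essentially rederived the paper's proof.
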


\begin{proof}
We know that any definable set admits a $C^0$-triangulation; see Coste \cite{Coste}. Suppose $T : K \to X$ is a triangulation of $X$, i.e. $T$ is a homeomorphism from a finite union of open simplices $K$ onto $X$. 

Let $L$ be the disjoint union of all the closures in $K$ of the $l$-dimensional open simplices of $K$ ($\dim X = l$). By identifying the closure in $K$ of two $(l-1)$-dimensional open faces of two elements of $L$ if these two faces coincide in $K$, we obtain a simplicial complex $\tilde{X}$. Since simplicial complexes are definable and identification and taking closures are definable conditions, $\tilde{X}$ is again a definable set. Denote by $\pi : \tilde{X} \to X$ the map induced by $T$. 

By construction it is clear that $\pi$ is a homeomorphism on the complement in $X$ of the $(l-2)$-skeleton. Thus, it is clear from the definition that the mapping $\pi$ induces a homeormophism over $M$ and that the link is connected at all singular points. This completes the lemma.
\end{proof}

\section{de Rham theorem for flat currents}

 Let $X \subset \bb R^n$ be a definable pseudomanifold in a polynomially bounded o-minimal structure and $M$ be its regular part. 
 
 \begin{lem}\label{zerohom} If $M$ is connected then the $0$-th flat cohomology group of $X$ $\al F H^0(X)$ is $\bb R$. 
\end{lem}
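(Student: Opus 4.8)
The plan is to identify $\al FH^0(X)$ with the space of closed flat $0$-currents and then to show that this space is one-dimensional, generated by the current of integration over $M$. Since a $(-1)$-current would be a continuous functional on $(l+1)$-forms and there are no such forms on the $l$-manifold $M$, we have $\al F^{-1}(X) = 0$; hence there are no coboundaries in degree $0$ and $\al FH^0(X) = \ker\big(\partial : \al F^0(X) \to \al F^1(X)\big)$. It therefore suffices to analyze closed flat $0$-currents, i.e. flat functionals $T$ on $\Omega^l_c(X)$ satisfying $T(d\omega) = 0$ for every $\omega \in \Omega^{l-1}_c(X)$.

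First I would produce the generator. Fixing an orientation of $M$, let $[M]$ be the current of integration, $[M](\omega) = \int_M \omega$ for $\omega \in \Omega^l_c(X)$. This is a normal $0$-current: its mass is the volume of the (compactly supported) region of $M$, and its boundary vanishes because $\partial[M](\omega) = \int_M d\omega = 0$ by Stokes' theorem, the only possible boundary contributions coming from the singular locus $X - M$, which has dimension at most $l-2$ and is hence $\mathcal{H}^{l}$- and $\mathcal{H}^{l-1}$-negligible. Being normal, $[M]$ is flat, and it is visibly nonzero, so it determines a nonzero class in $\al FH^0(X)$.

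Next I would show that every closed flat $0$-current is a constant multiple of $[M]$. Let $T$ be such a current. By Theorem \ref{thm21} we may write $T = R + \partial S$ with $R$ a flat $0$-current and $S$ a flat $(-1)$-current, both of finite mass; since $\al F^{-1}(X) = 0$ we get $S = 0$, so $T = R$ has finite mass. A top-dimensional flat current of finite mass on the oriented manifold $M$ is represented by integration against an $L^1$ density: there is $f \in L^1(M)$ with $T(\omega) = \int_M f\,\omega$, a purely singular part being excluded because it would have to be carried on a set of $l$-dimensional measure zero and cannot occur for a flat top-dimensional current. The closedness condition then reads $\int_M f\,d\omega = 0$ for all $\omega \in \Omega^{l-1}_c(X)$; restricting to test forms supported in $M$, this says precisely that the distributional exterior derivative of $f$ vanishes. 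Hence $f$ is locally constant, and since $M$ is connected $f \equiv c$ for some $c \in \bb R$, giving $T = c\,[M]$.

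Combining the two steps, the assignment $c \mapsto c\,[M]$ is an isomorphism $\bb R \to \al FH^0(X)$, which proves the lemma. The main obstacle is the structural step: justifying that a flat top-dimensional current of finite mass is genuinely an $L^1$ function with no singular part, and that the vanishing of $\int_M f\,d\omega$ forces $f$ to be weakly constant, together with the verification that the codimension-two singular locus $X - M$ contributes nothing to Stokes' formula. A secondary point to make explicit is the role of orientability of $M$, which is what makes $[M]$ well defined and allows the identification of the closed cycles with a single real constant.
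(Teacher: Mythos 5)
Your proposal is correct in outline and reaches the same conclusion, but the decisive step is genuinely different from the paper's. Both arguments begin the same way: since $\al F^{-1}(X)=0$, the group $\al FH^0(X)$ is just the space of closed flat $0$-currents (the paper phrases this confusingly, asserting that ``all $0$-currents are closed,'' but what it actually uses is only closedness of the given $T$). From there the paper argues softly: it takes a compactly supported $l$-form $\omega$ with $\int_M\omega=1$ generating $H^l_c(M)\cong\bb R$, writes an arbitrary $l$-form as $\gamma=a\omega+d\alpha$, and observes that a closed $T$ satisfies $T(d\alpha)=\partial T(\alpha)=0$, so $T\mapsto T(\omega)$ is injective on closed currents; no structure theory of flat currents is needed beyond linearity. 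You instead invoke Theorem \ref{thm21} to reduce to finite mass, then the (unstated in the paper, but genuine) Federer representation theorem that a top-dimensional flat current of finite mass is an $L^1$ density $f$, and finally the constancy theorem to get $f\equiv c$ and $T=c[M]$. Your route costs more machinery --- the $L^1$ representability and the absence of a singular part are exactly the ``main obstacle'' you flag, and they are not consequences of anything stated in the paper --- but it buys a stronger conclusion: an explicit description of every closed flat $0$-current as $c[M]$, rather than only the isomorphism of the cohomology group. Both proofs share the same unstated hypotheses, which you are right to surface: orientability of $M$ (needed for $H^l_c(M)\cong\bb R$, equivalently for $[M]$ to be defined and nonzero) and finiteness of the volume of $M$ (needed for $[M]$, the only candidate generator, to have finite mass and hence be flat); these hold in the paper's actual application to small ball neighbourhoods but not for an arbitrary definable pseudomanifold. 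Your verification that $\partial[M]=0$ via Stokes and the codimension-two singular locus is a point the paper silently assumes as well.
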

\begin{proof}
 If $T$ is a $0$-current then it is a linear functional on $l$-forms on $M$ with compact support in $X$, thus $\partial T(\omega) = T(d \omega) = 0$. This implies that all $0$-currents are closed. We show that $\al FH^0(X)$ is isomorphic to $\bb R$. The function on $\al F^0(X)$ defined by sending an element $T$ to $T(\omega)$, where $\int_M \omega = 1$ is a generator of the $l$-th compact de Rham cohomology group of $X$, is certainly surjective. We show that it is also injective. So, suppose $T$ is a current such that $T (\omega) = 0$, then since the $l$-th compact cohomology group of $X$ is $\bb R$, every $l$-form $\gamma$ on $X$ can be written as $\gamma = a \omega + d\alpha$. Then $T(\gamma) = a T(\omega) + T(d \alpha) = \partial T(\gamma) =0$. This implies that $T=0$ and $\al FH^0(X) \simeq \bb R$.
 \end{proof}
 
 The main result is as follows:
 
 \begin{thm} Let $\al D$ be a polynomially bounded o-minimal structure on $\bb R$ and $X \subset \bb R^n$ be a definable pseudomanifold in $\al D$. Then, the intersection cohomology in the top perversity of $X$ in $\al D$ is isomorphic to the flat cohomology of $X$. 
 \end{thm}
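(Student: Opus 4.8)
The plan is to run a sheaf-theoretic de Rham argument that globalizes the local Poincar\'e lemma (Theorem \ref{lem_Poincare}) and then identifies the resulting sheaf cohomology with the top-perversity intersection cohomology through the normalization. First I would organize the flat currents into a complex of sheaves on $X$: for an open $U \subset X$ let $\underline{\al F}^p(U)$ be the flat $p$-currents on $U \cap M$, with restriction maps induced by restricting functionals to forms supported in smaller opens and with differential $\partial$. Each $\underline{\al F}^p$ is a module over the sheaf of smooth functions on $X$ (multiplication of a flat current by a smooth bump function again has locally finite mass, via Theorem \ref{thm21}), hence is a fine, and in particular acyclic, sheaf. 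The local Poincar\'e lemma (Theorem \ref{lem_Poincare}), together with Lemma \ref{Inc} identifying flat and weak flat cohomology, shows that the cohomology sheaves $\mathcal H^p(\underline{\al F}^\bullet)$ vanish for $p>0$. Thus $(\underline{\al F}^\bullet,\partial)$ is a fine resolution of the single sheaf $\al A := \ker\!\big(\partial\colon \underline{\al F}^0 \to \underline{\al F}^1\big)$, and the abstract de Rham theorem gives $\al F H^\ast(X) \cong H^\ast(X,\al A)$, provided one checks that the global sections of $\underline{\al F}^p$ are exactly the global flat $p$-currents defining $\al F H^\ast(X)$ (flatness being a local condition by the structure theorem).

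The next step is to identify $\al A$. Its stalk at $x$ is the local $0$-th flat cohomology, which by the computation in Lemma \ref{zerohom}, applied on a Lipschitz-retractible neighbourhood $U$, is $\bb R^{c(x)}$ where $c(x)$ is the number of connected components of $U \cap M$, i.e. the number of local branches of $M$ at $x$. Passing to the definable normalization $\pi\colon \tilde X \to X$ of Lemma \ref{lemnorm} (see also Theorem \ref{existnorm}), these branches are precisely the points of the fibre $\pi^{-1}(x)$, since the normalization separates the components of the link while restricting to a homeomorphism over $M$. Hence the stalk of $\al A$ at $x$ is $\bb R^{|\pi^{-1}(x)|}$, and a check on restriction maps identifies $\al A$ with the pushforward $\pi_\ast \bb R_{\tilde X}$ of the constant sheaf on $\tilde X$.

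It then remains to compute $H^\ast(X, \pi_\ast \bb R_{\tilde X})$. Because $\pi$ is finite-to-one and proper, its fibres are finite and discrete, so $R^q\pi_\ast \bb R_{\tilde X} = 0$ for $q>0$; the Leray spectral sequence degenerates and yields $H^\ast(X, \pi_\ast \bb R_{\tilde X}) \cong H^\ast(\tilde X, \bb R)$, the singular cohomology of the normalization. Finally, Theorem \ref{GMnorm} identifies $H^\ast(\tilde X)$ with the intersection cohomology of $X$ in the top perversity. Concatenating the isomorphisms gives $\al F H^\ast(X) \cong H^\ast(X, \pi_\ast \bb R_{\tilde X}) \cong H^\ast(\tilde X) \cong IH^\ast_{\overline t}(X)$, as desired.

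I expect the main obstacle to be the sheaf-theoretic bookkeeping rather than any single hard estimate: concretely, verifying that the sheaves $\underline{\al F}^p$ are soft (so that multiplication by a partition of unity preserves flatness and finite local mass), that their global sections recover the global flat currents, and above all the precise identification of the zeroth cohomology sheaf with $\pi_\ast \bb R_{\tilde X}$. This last point is the conceptual heart of the theorem: it is the disconnection of the links, encoded by the fibres of the normalization, that forces the flat cohomology to see $H^\ast(\tilde X)$ and hence the \emph{top} perversity intersection cohomology, rather than the ordinary cohomology $H^\ast(X)$.
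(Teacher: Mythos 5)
Your proposal is correct and follows essentially the same route as the paper: a fine resolution by sheaves of flat currents, acyclic in positive degrees by the local Poincar\'e lemma, combined with the definable normalization and Theorem \ref{GMnorm}. The only difference is organizational: the paper transports the sheaf of flat currents to $\tilde X$ via $\tilde{\al S}^j(U)=\al F^j(\pi(U)\cap M)$ and resolves the constant sheaf $\bb R_{\tilde X}$ there, whereas you stay on $X$ and identify the kernel sheaf with $\pi_*\bb R_{\tilde X}$ before applying the (degenerate) Leray spectral sequence --- the same computation, since $\pi_*$ is exact for the finite map $\pi$.
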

 
 \begin{proof}
 Let $M$ be the regular part of $X$. Denote by $\al F^j(X)$ the set of flat currents on $X$. Let $\pi : \tilde{X} \to X$ be a normalization of $X$, which exists by Lemma \ref{lemnorm}. Define a presheaf $\tilde{\al S}^j$ on $\tilde{X}$ by setting  $\tilde{\al S}^j (U) = \al F^j(\pi(U) \cap M)$ for any open set $U$. That $\tilde{\al S}^j$ is a presheaf can be seen as follows:
 
 Given open sets $V \subset U$ of $X$, and $T \in \al F^j(\pi(U)\cap M)$ we can define a restriction $T^U_V \in \al F^j(\pi(V) \cap M)$ by setting $T^U_V(\omega) = T(\overline{\omega})$, where $\overline{\omega}$ is an extension of $\omega$ on $\pi(U) \cap M$ by $0$. The extension is well defined because $\omega$ has compact support in $\pi(V) \cap M$ and thus in $\pi(U) \cap M$.


 By abuse of notation, we denote still by $\tilde{\al S}^j$ the sheafification of this presheaf. Since $\pi$ is a homemorphism above $M$, the global sections of $\tilde{\al S}^j$, denoted $\tilde{\al S}^j(X)$, will be flat currents on $X$. Thus, $\pi$ induces a homeomorphism of cochain complexes $\tilde{\al S}^*(\tilde{X}) \equiv \al F^*(X)$.
 
 Now, let $x_0 \in \tilde{X}$ and set $U=B^n(\epsilon,x_0) \cap \tilde{X}$ for a small enough $\epsilon$. Since $\tilde{X}$ is a normalization,  $\pi(U) \cap M$ is connected, thus the zero order flat cohomology of $\pi(U)\cap M$ is $\bb R$ by Lemma \ref{zerohom}. 
 
 By the Poincar\'e Lemma \ref{lem_Poincare}, the germ at $\pi(x_0)$ of a closed flat current on $\pi(U) \cap M$, is the boundary of an element of the stalk of $\tilde{\al S}^j$ at $x_0$. Thus, the sheaves $\tilde{\al S}^0$, $\tilde{\al S}^1$ $\ldots$ will define a fine torsionless resolution of the constant sheaf. That is,
 $$0 \rightarrow \bb R \rightarrow \tilde{\al S}^0 \rightarrow \tilde{\al S}^1 \rightarrow \ldots$$
 is a fine torsionless resolution of the constact sheaf. 
 
 By the classical arguments of sheaf theory (see Warner \cite{Warner}), this implies that the resulting complex of global sections
 $$0 \rightarrow \tilde{\al S}^0(\tilde{X}) \rightarrow \tilde{\al S}^1(\tilde{X}) \rightarrow \ldots$$
 is isomorphic to the complex of flat currents on $X$. Therefore, the singular cohomology of $\tilde{X}$ is isomorphic to the flat cohomology of $X$. Thus, by Theorem \ref{GMnorm} it is clear that the flat cohomology of $X$ is isomorphic to the intersection cohomology in the top perversity. 
 \end{proof}

\subsection*{Acknowledgements} The first author was supported by FAPESP grant 2015/12667-5.


\begin{thebibliography}{1}
	\bibitem{Coste}
	M.~Coste.
	\newblock An introduction to o-minimal geometry.
	\newblock Dip. Mat. Univ. Pisa, Dottorato di Ricerca in Matematica, Istituti
	Editoriali e Poligrafici Internazionali, Pisa, 2000.
	
	
	\bibitem{Cheeger}
	J.~Cheeger.
	\newblock On the {H}odge theory of {R}iemannian pseudomanifolds.
	\newblock In {\em Geometry of the {L}aplace operator ({P}roc. {S}ympos. {P}ure
		{M}ath., {U}niv. {H}awaii, {H}onolulu, {H}awaii, 1979)}, Proc. Sympos. Pure
	Math., XXXVI, pages 91--146. Amer. Math. Soc., Providence, R.I., 1980.
	
	\bibitem{Goresky1}
	M.~Goresky and R.~MacPherson.
	\newblock {Intersection homology theory.}
	\newblock {\em {Topology}}, 19:135--165, 1980.
	
	\bibitem{Pati}
	W.~C. Hsiang and V.~Pati.
	\newblock {$L^2$}-cohomology of normal algebraic surfaces. {I}.
	\newblock {\em Invent. Math.}, 81(3):395--412, 1985.
	
	\bibitem{leGal}
	O.~Le~Gal and J-P. Rolin, \emph{An o-minimal structure which does not admit
		{$C^\infty$} cellular decomposition}, Ann. Inst. Fourier (Grenoble)
	\textbf{59} (2009), no.~2:543--562.
	
	\bibitem{ValetteNguyen}
Nhan Nguyen and Guillaume Valette, \emph{Lipschitz stratifications in o-minimal
  structures}, Ann. Sci. \'Ec. Norm. Sup\'er. (4) \textbf{49} (2016), no.~2,
  399--421.
	
	
	\bibitem{Saper}
	L.~Saper.
	\newblock {$L_2$}-cohomology and intersection homology of certain algebraic
	varieties with isolated singularities.
	\newblock {\em Invent. Math.}, 82(2):207--255, 1985.
	
	\bibitem{Valette1}
	G.~Valette.
	\newblock {{$L^{\infty}$}-cohomology is intersection cohomology}.
	\newblock {\em Adv. in Math.}, 231(3--4):1818 -- 1842, 2012.
	
	\bibitem{Dries1}
	L.~van~den Dries.
	\newblock {\em {Tame topology and o-minimal structures.}}
	\newblock Cambridge: Cambridge University Press, 1998.
	
	\bibitem{Warner}
	F.~Warner.
	\newblock {Foundations of differentiable manifolds and Lie groups. Reprint.}
	\newblock {Graduate Texts in Mathematics, 94. New York etc.: Springer-Verlag.},
	{1983}.
	
	\bibitem{Youssin}
	B.~Youssin.
	\newblock {$L^p$} cohomology of cones and horns.
	\newblock {\em J. Differential Geom.}, 39(3):559--603, 1994.
	
\end{thebibliography}

\end{document}